\DeclareMathOperator{\Ext}{Ext}
\DeclareMathOperator{\Hom}{Hom}
\DeclareMathOperator{\Aut}{Aut}
\newcommand{\gen}[1]{\left\langle#1\right\rangle} % generating set
\newcommand{\ds}{\displaystyle}
\newcommand{\zz}{\mathbb{Z}}
\newcommand{\qq}{\mathbb{Q}}
\newcommand{\rr}{\mathbb{R}}
\newtheorem{theorem}{Theorem}
\newtheorem{proposition}{Proposition}
\newtheorem{corollary}{Corollary}
\newenvironment{proof}{\medskip\noindent\textit{Proof: \/}}
               {\begin{flushright}\rule{2mm}{2mm}\end{flushright}\par\medskip}
\newenvironment{remark}{\medskip\noindent\textbf{Remark: }}
               {\par\medskip}
\begin{document}

\pagestyle{myheadings}
\markright{Cohomology ring of group surfaces}

\title{Diagonal approximation and the cohomology ring\\ of the fundamental groups of surfaces} 
\date{}
\author{S\'ergio Tadao Martins\footnote{The author has received financial support from FAPESP, process number 2013/07510-4} \and Daciberg Lima Gon\c calves\footnote{Corresponding author: Dept. de Matem\'atica --- IME-USP, Caixa Postal 66.281 --- CEP 05314--970, S\~ao Paulo --- SP, Brasil; FAX: 55--11--30916183, email address: \texttt{dlgoncal@ime.usp.br}}}
%{\small\it Instituto de Matem\'atica e Estat\'istica --- Universidade de S\~ao Paulo}\\
%{\small\it Dept. de Matem\'atica --- IME-USP, Caixa Postal 66.281 --- CEP 05314--970,}\\
%{\small\it S\~ao Paulo --- SP, Brasil; FAX: 55--11--30916183}}
%\email{sergiotm@ime.usp.br}}
\maketitle

\begin{abstract}
We construct finite free resolutions of $\zz$ over $\zz\pi$, where
$\pi$ is the fundamental group of a surface distinct from $S^2$ and
$\rr P^2$, and define diagonal approximations for these
resolutions. We then proceed to give some possible applications that
come with the knowledge of those maps.

\bigskip {\em Keywords:\/} surface, group cohomology, diagonal
approximation, local coefficients, bundles.

\bigskip {\em 2000 Mathematics Subject Classification:} primary:
20J06; secondary: 57M20, 55R15.
\end{abstract}

\section{Introduction}

The closed surfaces other than $S^2$ and $\rr P^2$ are $K(\pi,1)$
spaces, which means that their cohomology rings coincide with the
cohomology rings of their fundamental groups. There are many
situations where one may be interested in the cohomology groups of the
surfaces. See, for instance,~\cite{DacibergEdson97}. Given that
interest, in this paper we define finite free resolutions for the
fundamental groups of the $K(\pi,1)$ surfaces and partial diagonal
approximations for these resolutions, which allow us to compute the
cohomology rings $H^*(\pi,M)$ for any coefficient $M$ in an efficient
way.

\bigskip
Let us briefly recall how to define and compute (at least in theory)
not only the cohomology groups $H^*(\pi,M)$ for a given group $\pi$
with coefficients in the $\zz\pi$-module $M$, but also how to
determine the multiplicative structure given by the cup product. More
details about the definitions can be found in~\cite{Brown82}.

If $M$ is a (left) $\zz\pi$-module, the cohomology group $H^n(\pi,M)$
is defined by $H^n(\pi,M) = \Ext_{\zz\pi}^n(\zz,M)$ for $n\ge 0$,
where $\zz$ is the trivial $\zz\pi$-module. Hence one way to compute
the groups $H^n(\pi,M)$ is this: first, we find a projective
resolution $P$ over the ring $\zz\pi$ of the trivial $\zz\pi$-module
$\zz$. Then we apply the functor
$\Hom_{\zz\pi}(\underline{\phantom{M}},M)$ to the chain complex $P$
and the cohomology groups $H^n(\pi,M)$ are the cohomology groups of
the chain complex $\Hom_{\zz\pi}(P,M)$. Also, if
\[
\xymatrix{
\cdots \ar[r] & P_n \ar[r]^-{d_n} & P_{n-1} \ar[r] & \cdots \ar[r] & P_0 \ar[r]^-{\varepsilon} & \zz \ar[r] & 0 \\
}
\]
is a projective resolution of $\zz$, then
\[
\xymatrix{
\cdots \ar[r] & (P\otimes P)_n \ar[r]^-{\partial_n} & (P\otimes P)_{n-1} \ar[r] & \cdots \ar[r] & (P\otimes P)_0 \ar[r]^-{\varepsilon\otimes\varepsilon} & \zz\otimes\zz \cong \zz \ar[r] & 0 \\
}
\]
is also a projective resolution, and there is a map of chain complexes
$\Delta\colon P\to (P\otimes P)$ such that
\[
\xymatrix{
\cdots \ar[r] & P_n \ar[r]^-{d_n} \ar[d]^-{\Delta_n} & P_{n-1} \ar[r] \ar[d]^-{\Delta_{n-1}} & \cdots \ar[r] & P_0 \ar[r]^-{\varepsilon} \ar[d]^-{\Delta_0} & \zz \ar[r]\ar@{=}[d] & 0 \\
\cdots \ar[r] & (P\otimes P)_n \ar[r]^-{\partial_n} & (P\otimes P)_{n-1} \ar[r] & \cdots \ar[r] & (P\otimes P)_0 \ar[r]^-{\varepsilon\otimes\varepsilon} & \zz \ar[r] & 0 \\
}
\]
is commutative. The map $\Delta$ is called a {\em diagonal
  approximation\/} for the resolution $P$ and is used to define the
cup product
\[
H^p(\pi,M) \otimes H^q(\pi,N) \stackrel{\smile}{\to} H^{p+q}(\pi, M\otimes N)
\]
in the following way: let $u\in \Hom_{\zz\pi}(P_p,M)$, $v\in
\Hom_{\zz\pi}(P_q,N)$ and let $\alpha\in H^p(G,M)$ and $\beta\in
H^q(\pi,N)$ be the classes of the homomorphisms $u$ and $v$,
respectively. The {\em cup produt\/} of $\alpha$ and $\beta$ is
defined by
\begin{equation}\label{eq:alphabeta}
(\alpha\smile \beta) = [(u\otimes v)\circ \Delta] \in
H^{p+q}(\pi,M\otimes N),
\end{equation}
where $[\phantom{M}]$ denotes the cohomology class. Given that
definition, we can be a little more specific: the product
$(\alpha\smile\beta)$ in~(\ref{eq:alphabeta}) is the cohomology class
of the map $(u\otimes v)\circ \Delta_{pq}$, where $\Delta_{pq}\colon
P_{p+q}\to P_p\otimes P_q$ is the composition of $\Delta_{p+q} \colon
P_{p+q}\to (P\otimes P)_{p+q}$ with the projection $\pi_{pq}\colon
(P\otimes P)_{p+q}\to (P_p\otimes P_q)$.

Thus, the computation of the cohomology groups $H^*(\pi,M)$ together
with the multiplicative structure given by the cup product can be
accomplished if we manage to find the free resolution $P$ and the
diagonal approximation $\Delta$ for the resolution $P$. This is no
easy task in general. In~\cite{TomodaPeter2008}, we find the following
two propositions, which can help us determine $\Delta$ and were used
by the authors to compute the cohomology of certain $4$-periodic
groups:

\begin{proposition}\label{proposition:tomodacontraction}
For a group $\pi$, let
\[
\xymatrix{
  \cdots \ar[r] & C_n \ar[r] & \cdots \ar[r] & C_1 \ar[r] & C_0 \ar[r]^-{\varepsilon} \ar[r] & \zz \ar[r] & 0
}
\]
be a finitely generated free resolution of $\zz$ over $\zz \pi$, that
is, each $C_n$ is finitely generated as a $\zz \pi$-module. If $s$ is a
contracting homotopy for the resolution $C$, then a contracting
homotopy $\tilde{s}$ for the free resolution $C\otimes C$ of $\zz$
over $\zz \pi$ is given by
\begin{align*}
\tilde{s}_{-1}\colon &\zz \to C_0\otimes C_0 \\
&\tilde{s}_{-1}(1) = s_{-1}(1)\otimes s_{-1}(1), \\
\tilde{s}_n\colon &(C\otimes C)_n \to (C\otimes C)_{n+1} \\
& \tilde{s}_n(u_i\otimes v_{n-i}) = s_i(u_i)\otimes v_{n-i} + s_{-1}\varepsilon(u_i)\otimes s_{n-i}(v_{n-i}), \quad\text{if $n\ge 0$},
\end{align*}
where $s_{-1}\varepsilon\colon C_0\to C_0$ is extended to
$s_{-1}\varepsilon = \{(s_{-1}\varepsilon)_n \colon C_n \to C_n\}$ in
such a way that $(s_{-1}\varepsilon)_n = 0$ for $n\ge 1$.
\end{proposition}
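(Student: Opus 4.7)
The plan is to verify directly that $\tilde{s}$ satisfies the defining relations of a contracting homotopy for the augmented complex $(C\otimes C)\to\zz$, namely $(\varepsilon\otimes\varepsilon)\tilde{s}_{-1}=\id_{\zz}$ together with
\[
\partial_{n+1}\tilde{s}_n + \tilde{s}_{n-1}\partial_n = \id_{(C\otimes C)_n}
\]
for every $n\geq 0$ (where $\tilde{s}_{-1}\partial_0$ is to be read as $\tilde{s}_{-1}(\varepsilon\otimes\varepsilon)$). The augmentation identity is immediate from $\varepsilon s_{-1}(1)=1$, so the real content is the homotopy identity.

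My strategy is to evaluate both sides on a generator $u_i\otimes v_{n-i}$ of $(C\otimes C)_n$, splitting into the cases $i=0$ and $i\geq 1$, because the convention $(s_{-1}\varepsilon)_i=0$ for $i\geq 1$ kills the second summand in the definition of $\tilde{s}_n$ in the latter case. I would expand $\partial_{n+1}\tilde{s}_n(u_i\otimes v_{n-i})$ with the Leibniz rule $\partial(x\otimes y)=dx\otimes y + (-1)^{|x|}x\otimes dy$ and then apply the contracting-homotopy relations $d_{j+1}s_j = \id - s_{j-1}d_j$ for $j\geq 1$ and the special form $d_1 s_0 = \id - s_{-1}\varepsilon$ for $j=0$. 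One also needs the observation that $\varepsilon d_1=0$ wipes out the would-be $s_{-1}\varepsilon d_1(u_1)$ contribution coming from the $i=1$ case in the parallel expansion of $\tilde{s}_{n-1}\partial_n(u_i\otimes v_{n-i})$. Adding the two expansions, the sign-sensitive terms $(-1)^{i+1}s_i(u_i)\otimes d_{n-i}(v_{n-i})$ and $(-1)^i s_i(u_i)\otimes d_{n-i}(v_{n-i})$ cancel and the mixed $s$-terms telescope, leaving only $u_i\otimes v_{n-i}$.

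The case $n=0$ needs separate treatment: both tensor factors live in degree $0$, so the Leibniz rule contributes no $d_0$ term, and one must apply $d_1 s_0 = \id - s_{-1}\varepsilon$ to \emph{both} factors to reduce $\partial_1\tilde{s}_0(u\otimes v)$ to $u\otimes v - s_{-1}\varepsilon(u)\otimes s_{-1}\varepsilon(v)$; this correction term is precisely $\tilde{s}_{-1}(\varepsilon\otimes\varepsilon)(u\otimes v)$, so the two cancel. The main obstacle is not conceptual but bookkeeping: keeping the Koszul signs consistent, tracking exactly when $(s_{-1}\varepsilon)_n$ is non-zero, and handling the boundary values $i=0$, $i=1$, and $n=0$ carefully. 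No new input beyond the contracting-homotopy identities for $s$ itself is required.
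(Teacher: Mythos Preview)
The paper does not actually prove this proposition: it is quoted verbatim from~\cite{TomodaPeter2008} as background input, so there is no ``paper's own proof'' to compare against. Your direct verification is correct and is exactly the standard argument one would expect; the case split $i=0$ versus $i\geq 1$, the use of $\varepsilon d_1=0$ at $i=1$, and the separate treatment of $n=0$ are precisely the points that need care, and you have identified all of them.
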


\begin{proposition}\label{proposition:tomodadiagonal}
For a group $\pi$, let
\[
\xymatrix{
  \cdots \ar[r] & C_n \ar[r]^-{d_n} & \cdots \ar[r] & C_1 \ar[r]^-{d_1} & C_0 \ar[r]^-{\varepsilon} \ar[r] & \zz \ar[r] & 0
}
\]
be a finitely generated free resolution of $\zz$ over $\zz \pi$ (i.e.,
each $C_n$ is a finitely generated free $\zz \pi$-module), and let $s$
be a contracting homotopy for this resolution $C$. If $\tilde{s}$ is
the contracting homotopy for the resolution $C\otimes C$ given by
Proposition~\ref{proposition:tomodacontraction}, then a diagonal
approximation $\Delta\colon C\to C\otimes C$ can be defined in the
following way: for each $n\ge 0$, the map $\Delta_n\colon C_n \to
(C\otimes C)_n$ is given in each generator $\rho$ of $C_n$ by
\[
\begin{array}{l}
\Delta_0 = s_{-1}\varepsilon\otimes s_{-1}\varepsilon,\\
\Delta_n(\rho) = \tilde{s}_{n-1}\Delta_{n-1}d_n(\rho), \quad\text{if $n\ge 1$}.
\end{array}
\]
\end{proposition}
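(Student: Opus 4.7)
The plan is a direct induction on $n$ in which the contracting-homotopy identities for $\tilde s$ reduce the chain-map equation to the identity $d_{n-1}\circ d_n = 0$. Since each $C_n$ is a free $\zz\pi$-module, prescribing $\Delta_n$ on a set of free generators determines a unique $\zz\pi$-linear extension, and to check both the augmentation compatibility and the chain-map property $\partial_n\Delta_n = \Delta_{n-1}d_n$ it suffices to work generator by generator.

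First I would check the augmentation identity $(\varepsilon\otimes\varepsilon)\Delta_0 = \varepsilon$. On a generator $\rho$ of $C_0$ the formula gives $\Delta_0(\rho) = s_{-1}(1)\otimes s_{-1}(1)$, and using $\varepsilon s_{-1} = \id_\zz$ together with the isomorphism $\zz\otimes\zz\cong\zz$ the composition equals $1 = \varepsilon(\rho)$; extending $\zz\pi$-linearly (and recalling that $\zz$ carries the trivial action) yields the identity on all of $C_0$. For the inductive step, let $\rho$ be a generator of $C_n$ and apply the contracting-homotopy identity
\[
\partial_n\tilde s_{n-1} + \tilde s_{n-2}\partial_{n-1} = \id_{(C\otimes C)_{n-1}}
\]
(with $\tilde s_{n-2}\partial_{n-1}$ replaced by $\tilde s_{-1}(\varepsilon\otimes\varepsilon)$ when $n=1$) to the element $\Delta_{n-1}d_n(\rho)$. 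This gives
\[
\partial_n\Delta_n(\rho) \;=\; \partial_n\tilde s_{n-1}\Delta_{n-1}d_n(\rho) \;=\; \Delta_{n-1}d_n(\rho) - \tilde s_{n-2}\,\partial_{n-1}\Delta_{n-1}d_n(\rho).
\]
When $n=1$ the subtracted term is $\tilde s_{-1}(\varepsilon\otimes\varepsilon)\Delta_0 d_1(\rho) = \tilde s_{-1}\varepsilon d_1(\rho) = 0$ by the augmentation identity together with $\varepsilon d_1 = 0$. When $n\ge 2$ the inductive hypothesis allows us to replace $\partial_{n-1}\Delta_{n-1}$ by $\Delta_{n-2}d_{n-1}$, so the subtracted term becomes $\tilde s_{n-2}\Delta_{n-2}d_{n-1}d_n(\rho) = 0$ because $d_{n-1}d_n = 0$. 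This is exactly the chain-map equation on generators, and $\zz\pi$-linearity propagates it to all of $C_n$.

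The main point to be careful about is not really an obstacle but a bookkeeping issue: the contracting homotopies $s$ and $\tilde s$ furnished by Proposition~\ref{proposition:tomodacontraction} are only $\zz$-linear, while $d$, $\partial$, $\varepsilon$ and the candidate $\Delta$ must all be $\zz\pi$-linear. The recursion $\Delta_n(\rho) = \tilde s_{n-1}\Delta_{n-1}d_n(\rho)$ produces only an element of $(C\otimes C)_n$ for each free generator $\rho$, so $\zz\pi$-linearity of $\Delta_n$ is imposed by the equivariant extension from generators; once that is set up, the verification above is a routine manipulation of the contracting-homotopy identities.
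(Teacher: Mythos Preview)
Your argument is correct and is the standard inductive verification: you use the contracting-homotopy identity for $\tilde s$ to reduce $\partial_n\Delta_n(\rho)=\Delta_{n-1}d_n(\rho)$ to the inductive hypothesis and $d_{n-1}d_n=0$, and you correctly handle the base case via the augmentation identity together with $\varepsilon d_1=0$. Your remark that $s$ and $\tilde s$ are only $\zz$-linear while $\Delta_n$ is defined on free generators and then extended $\zz\pi$-linearly is exactly the right bookkeeping observation.

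As for comparison with the paper: the paper does not actually prove this proposition. Both Proposition~\ref{proposition:tomodacontraction} and Proposition~\ref{proposition:tomodadiagonal} are quoted from \cite{TomodaPeter2008} and stated without proof; they are used here only as tools for the computations in Section~2. So there is no ``paper's own proof'' to compare against, but your argument is the expected one and would serve perfectly well as a self-contained justification.
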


Essentially, the above propositions tell us that if we have a finitely
generated free resolution $P$ of $\zz$ over $\zz\pi$, then a diagonal
approximation $\Delta\colon P\to (P\otimes P)$ can be calculated if we
have a contracting homotopy for the resolution $P$. This is the
approach we will use to determine partial diagonal approximations for
the resolutions of the fundamental groups of the $K(\pi,1)$
surfaces. The word ``partial'' has the following meaning: since we are
dealing with $K(\pi,1)$ surfaces, the finitely generated free
resolution $P$ is given by the augmented cellular chain complex of the
universal cover $\rr^2$ of the surface as a free $\pi$-complex. As the
comohology groups $H^n(\pi,M)$ are then trivial for $n\ge 3$, the
observation after~(\ref{eq:alphabeta}) tells that we only need the
maps $\Delta_0\colon P_0\to (P_0\otimes P_0)$, $\Delta_{01}\colon P_1
\to (P_0\otimes P_1)$, $\Delta_{02}\colon P_2 \to (P_0\otimes P_2)$
and $\Delta_{11}\colon P_2 \to (P_1\otimes P_1)$ to compute the
meaningful products $H^p(\pi,M) \otimes H^q(\pi,N)
\stackrel{\smile}{\to} H^{p+q}(\pi, M\otimes N)$.

In Section 2 we'll use the results above to determine diagonal
approximations for the fundamental groups of the $K(\pi,1)$ surfaces,
and in Section 3 we make some comments on applications.

\section{Free resolutions and diagonal approximations}

Let $M_n$ be the orientable surface of genus $n\ge 1$, with
fundamental group $G$ given by the presentation
\[
G = \pi_1(M_n) = \gen{a_1,b_1,a_2,b_2,\ldots,a_n,b_n \mid
  [a_1,b_1][a_2,b_2]\cdot\cdots\cdot [a_n,b_n]}.
\]
We also let $p_i = [a_i,b_i] = a_ib_ia_i^{-1}b_i^{-1}$ and $p =
p_1p_2\cdot\cdots\cdot p_n$.

\begin{proposition}[Free resolution for the orientable case]\label{proposition:orientableresolution}
A free resolution of $\zz$ over $\zz G$ is given by
\begin{equation}\label{eq:orientableresolution}
\xymatrix{
0 \ar[r] & P_2 \ar[r]^-{d_2} & P_1 \ar[r]^-{d_1} & P_0 \ar[r]^-\varepsilon & \zz \ar[r] & 0,
}
\end{equation}
where
\begin{align*}
P_0 &= \gen{x} \cong \zz G,\\
P_1 &= \gen{y_1,\ldots,y_n,z_1,\ldots,z_n} \cong \zz G^{2n},\\
P_2 &= \gen{w} \cong \zz G,
\end{align*}
and the maps $\varepsilon$, $d_1$ and $d_2$ are defined by
\begin{align*}
\varepsilon(x) &= 1, \\
d_1(y_i) &= (a_i-1)x, \\
d_1(z_i) &= (b_i-1)x, \\
d_2(w) &= \ds\sum_{i=1}^n\left(\frac{\partial p}{\partial a_i}y_i + \frac{\partial p}{\partial b_i}z_i\right),
\end{align*}
where the partial derivatives are the Fox derivatives.
\end{proposition}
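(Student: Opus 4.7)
The plan is to exhibit this resolution as the augmented cellular chain complex of the universal cover $\tilde{M}_n \cong \rr^2$ equipped with its standard $G$-equivariant CW structure. Since $M_n$ with $n\ge 1$ is a $K(G,1)$, the universal cover is contractible, so this augmented chain complex is exact; and since the $G$-action on $\tilde{M}_n$ is free and permutes the cells freely, each chain group is a free $\zz G$-module on one generator per cell of $M_n$.

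First I would describe the CW structure on $M_n$ dictated by the presentation: one $0$-cell $v$, $2n$ oriented $1$-cells labelled $a_1,b_1,\ldots,a_n,b_n$ (each a loop at $v$), and a single $2$-cell $F$ attached along the word $p=p_1\cdots p_n$. Fixing lifts $\tilde v$, $\tilde a_i$, $\tilde b_i$, $\tilde F$ in $\tilde{M}_n$ identifies the cellular chain groups with the stated $P_0$, $P_1$, $P_2$, with $x$, $y_i$, $z_i$, $w$ playing the roles of those lifts; the augmentation $\varepsilon(x)=1$ is the usual augmentation sending each $0$-cell to $1$, and is surjective with kernel the boundaries of $1$-cells.

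Next I would compute the boundaries. For $d_1$, the lifted edge $\tilde a_i$ runs from $\tilde v$ to $a_i\cdot \tilde v$, giving $d_1(y_i)=a_i\tilde v-\tilde v=(a_i-1)x$; similarly $d_1(z_i)=(b_i-1)x$. The main technical step is the formula for $d_2(w)$, which I would derive from the standard fact that when a $2$-cell is attached along a word $r$ in free generators $g_1,\ldots,g_m$, the equivariant cellular boundary of its lift is $\sum_j (\partial r/\partial g_j)\,\tilde g_j$, where $\partial/\partial g_j$ is the Fox derivative characterised as the unique derivation $\zz F\to \zz G$ with $\partial g_i/\partial g_j=\delta_{ij}$. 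The proof is by induction on word length: reading off the boundary of the lift of a concatenation $uv$ in the Cayley $2$-complex yields the lift of $u$ followed by the $u$-translate of the lift of $v$, which matches the Leibniz identity $\partial(uv)/\partial g_j=\partial u/\partial g_j+u\cdot\partial v/\partial g_j$; inverses are handled by the derivation rule $\partial g^{-1}/\partial g_j=-g^{-1}\partial g/\partial g_j$. Specialising to $r=p$ in the generators $a_i, b_i$ produces exactly the stated formula for $d_2(w)$.

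Exactness is then automatic, since the complex is the augmented cellular chain complex of the contractible space $\rr^2$. The main obstacle is the bookkeeping for $d_2$: making the Fox derivative formula precise and checking signs/orientations when lifting the attaching map of $F$. Everything else amounts to recognising the terms of the resolution as the generators of the equivariant cellular chain complex.
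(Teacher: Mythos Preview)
Your approach is essentially the same as the paper's: both identify the resolution with the augmented cellular chain complex of the universal cover $\rr^2$, using that $M_n$ is a $K(G,1)$ so the cover is contractible and the cells are freely permuted. The only difference is that the paper simply cites \cite{Fadell83} for the identification of the differentials with Fox derivatives, whereas you sketch that computation explicitly via the Leibniz rule for the lift of the attaching word.
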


\begin{proof}
Since the surface $M_n$ is a $K(G,1)$ space, its universal cover
$\rr^2$ is a free and contractible $G$-complex. Hence its augmented
cellular chain complex, which is proved to be
exactly~\ref{eq:orientableresolution} in~\cite{Fadell83}, is a free
resolution of $\zz$ over $\zz G$.
\end{proof}

We now proceed to construct a diagonal approximation $\Delta\colon
P\to (P\otimes P)$ for the resolution $P$ given by the above
proposition.

\begin{theorem}[Diagonal approximation for the orientable case]\label{theorem:orientablediagonal}
Let $P$ be the free resolution of $\zz$ over $\zz G$ given by
Proposition~\ref{proposition:orientableresolution}. A diagonal
approximation $\Delta\colon P\to (P\otimes P)$ is partially given by
{\allowdisplaybreaks
\begin{align*}
\Delta_0\colon & P_0 \to (P\otimes P)_0 \\
&\Delta_0(x) = x\otimes x,\\
\Delta_1\colon & P_1 \to (P\otimes P)_1 \\
&\Delta_1(y_i) = y_i\otimes a_ix + x\otimes y_i, \\
&\Delta_1(z_i) = z_i\otimes b_ix + x\otimes z_i, \\
\Delta_{02}\colon & P_2 \to (P_0\otimes P_2) \\
&\Delta_{02}(w) = x\otimes w, \\
\Delta_{11}\colon & P_2 \to (P_1\otimes P_1) \\
&\Delta_{11}(w) = \sum_{i=1}^n\biggl[\sum_{j=1}^{i-1} (p_1\cdots p_{j-1})(1-a_jb_ja_j^{-1})y_j\otimes(p_1\cdots p_{i-1})y_i +\\
&\phantom{\Delta_{11}(w)=\sum\biggl[} + \sum_{j=1}^{i-1} (p_1\cdots p_{j-1})a_j(1-b_ja_j^{-1}b_j^{-1})z_j\otimes (p_1\cdots p_{i-1})y_i\biggr] -\\
&\phantom{\Delta_{11}(w)=} - \sum_{i=1}^{n-1}\biggl[\sum_{j=1}^{i-1}\biggl((p_1\cdots p_{j-1})(1-a_jb_ja_j^{-1})y_j\otimes (p_1\cdots p_{i-1})a_ib_ia_i^{-1}y_i + \\
&\phantom{\Delta_{11}(w)= -\sum\biggl[\sum\biggl(} + (p_1\cdots p_{j-1})a_j(1-b_ja_j^{-1}b_j^{-1})z_j\otimes (p_1\cdots p_{i-1})a_ib_ia_i^{-1}y_i\biggr) + \\
&\phantom{\Delta_{11}(w)= -\sum\biggl[} + (p_1\cdots p_{i-1})(1-a_ib_ia_i^{-1})y_i\otimes (p_1\cdots p_{i-1})a_ib_ia_i^{-1}y_i + \\
&\phantom{\Delta_{11}(w)= -\sum\biggl[} + (p_1\cdots p_{i-1})a_iz_i\otimes (p_1\cdots p_{i-1})a_ib_ia_i^{-1}y_i \biggr] + \\
&\phantom{\Delta_{11}(w)=} + \sum_{i=1}^{n}\biggl[\sum_{j=1}^{i-1}\biggl((p_1\cdots p_{j-1})(1-a_jb_ja_j^{-1})y_j\otimes (p_1\cdots p_{i-1})a_iz_i + \\
&\phantom{\Delta_{11}(w)= +\sum\biggl[\sum\biggl(} + (p_1\cdots p_{j-1})a_j(1-b_ja_j^{-1}b_j^{-1})z_j \otimes (p_1\cdots p_{i-1})a_iz_i\biggr) + \\
&\phantom{\Delta_{11}(w)= +\sum\biggl[}+(p_1\cdots p_{i-1})y_i \otimes (p_1\cdots p_{i-1})a_iz_i\biggr] - \\
&\phantom{\Delta_{11}(w)=} - \sum_{i=1}^{n-1}\biggl[\sum_{j=1}^i (p_1\cdots p_{j-1})(1-a_jb_ja_j^{-1})y_j\otimes (p_1\cdots p_i)z_i + \\
&\phantom{\Delta_{11}(w)= -\sum\biggl[\sum\biggl(} (p_1\cdots p_{j-1})a_j(1-b_ja_j^{-1}b_j^{-1})z_j\otimes (p_1\cdots p_i)z_i\biggr] - \\
&\phantom{\Delta_{11}(w)=} - z_n\otimes b_ny_n.
\end{align*}
}
\end{theorem}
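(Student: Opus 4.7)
The plan is to apply Proposition~\ref{proposition:tomodadiagonal}, whose recursion $\Delta_n = \tilde{s}_{n-1}\Delta_{n-1}d_n$ (starting from $\Delta_0 = s_{-1}\varepsilon\otimes s_{-1}\varepsilon$) reduces the whole theorem to writing down an explicit contracting homotopy $s = \{s_{-1}, s_0, s_1\}$ for the resolution $P$ of Proposition~\ref{proposition:orientableresolution}, together with the companion homotopy $\tilde{s}$ on $P\otimes P$ furnished by Proposition~\ref{proposition:tomodacontraction}. The formulas for $\Delta_0$ and $\Delta_1$ will then emerge directly, and the formulas for $\Delta_{02}$ and $\Delta_{11}$ will come from projecting $\Delta_2(w)$ onto the summands $P_0\otimes P_2$ and $P_1\otimes P_1$ of $(P\otimes P)_2$.

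I take $s_{-1}(1) = x$ and, for $s_0\colon P_0\to P_1$, the Fox-calculus formula $s_0(gx) = \sum_{i=1}^n\bigl(\tfrac{\partial g}{\partial a_i}y_i + \tfrac{\partial g}{\partial b_i}z_i\bigr)$ on group-element generators $g$, extended $\zz$-linearly. The identity $d_1 s_0 + s_{-1}\varepsilon = \id_{P_0}$ is then the Fox identity $g-1 = \sum_i \tfrac{\partial g}{\partial a_i}(a_i-1) + \tfrac{\partial g}{\partial b_i}(b_i-1)$ in $\zz G$. A key observation is that no explicit formula for $s_1\colon P_1\to P_2$ will be needed for the part of $\Delta_2$ we want: since $P_3 = 0$, the contracting-homotopy identity forces $s_1 d_2 = \id_{P_2}$, and that aggregate property is the only information about $s_1$ that enters the computation.

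With these ingredients, $\Delta_0(x) = s_{-1}\varepsilon(x)\otimes s_{-1}\varepsilon(x) = x\otimes x$, and $\Delta_1(y_i) = \tilde{s}_0\Delta_0 d_1(y_i) = \tilde{s}_0(a_ix\otimes a_ix - x\otimes x)$ expands, using $s_0(a_ix) = y_i$ and $s_0(x) = 0$, to $y_i\otimes a_ix + x\otimes y_i$; the computation for $z_i$ is identical. For $\Delta_2$, I first rewrite $d_2(w) = \sum_{i=1}^n\bigl[(p_1\cdots p_{i-1})(1-a_ib_ia_i^{-1})y_i + (p_1\cdots p_{i-1})a_i(1-b_ia_i^{-1}b_i^{-1})z_i\bigr]$ using explicit Fox derivatives of $p = p_1\cdots p_n$. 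Each group-element summand $g e_i$ of $d_2(w)$ (with $e_i\in\{y_i,z_i\}$) yields via $\Delta_1$ a $P_1\otimes P_0$ piece together with a $P_0\otimes P_1$ piece $gx\otimes g e_i$; the map $\tilde{s}_1$ in turn sends $gx\otimes g e_i$ to $s_0(gx)\otimes g e_i$ in $P_1\otimes P_1$ plus $x\otimes s_1(g e_i)$ in $P_0\otimes P_2$. Summing the $P_0\otimes P_2$ contributions over all summands of $d_2(w)$ and using $s_1 d_2 = \id_{P_2}$ produces $x\otimes w$, so $\Delta_{02}(w) = x\otimes w$. Expanding each $s_0(gx)$ via the Leibniz rule for Fox derivatives then yields, term by term, the nested sums written in the statement for $\Delta_{11}(w)$.

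The hard part is the bookkeeping for the boundary index $i = n$. Because of the defining relation $p_1\cdots p_n = 1$ in $G$, two collapses occur. First, the group element $(p_1\cdots p_{n-1})a_nb_na_n^{-1}$ equals $b_n$, so $s_0$ of the associated basis element reduces to $z_n$; this produces the exceptional trailing summand $-z_n\otimes b_ny_n$ and is why one of the middle telescoping sums in the statement is truncated at $i = n-1$. Second, $p_1\cdots p_n = 1$, so $s_0$ of the associated element vanishes, which truncates the fourth telescoping sum at $i = n-1$ as well. Tracking these collapses, together with the signs and nested index ranges across the four blocks of $d_2(w)$, is the delicate part; the remainder is a mechanical, if lengthy, coefficient comparison.
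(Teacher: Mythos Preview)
Your approach is essentially the same as the paper's: both invoke Propositions~\ref{proposition:tomodacontraction} and~\ref{proposition:tomodadiagonal} with the contracting homotopy $s_{-1}(1)=x$, $s_0$ given by Fox derivatives of a chosen word representative, and reduce $\Delta_{02}$ to the single aggregate identity $s_1\bigl(d_2(w)\bigr)=w$ without ever writing $s_1$ explicitly. Your discussion of the $i=n$ collapses coming from the relation $p_1\cdots p_n=1$ actually spells out in detail a step the paper leaves implicit when it simply asserts the $\Delta_{11}$ formula.
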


\begin{proof}
There is a contracting homotopy $s$ for the resolution $P$ such that
\begin{align*}
s_{-1}(1) &= x, \\
s_0(\tilde{p}x) &= \ds\sum_{j=1}^n\left(\frac{\partial \tilde{p}}{\partial a_j}y_j + \frac{\partial \tilde{p}}{\partial b_j}z_j\right),
\end{align*}
where $\tilde{p}$ is a reduced word in $G$ (we must, of course, be
careful in our choice of $\tilde{p}$ in order to keep ourselves from
defining $s_0$ in two different ways for the same element of
$P_0$). Once we have $s_{-1}$ and $s_0$, we can quickly compute
$\Delta_0\colon P_0 \to P_0\otimes P_0$ and $\Delta_1\colon P_1 \to
(P\otimes P)_1$ using Proposition~\ref{proposition:tomodadiagonal}. We
get
\begin{align*}
\Delta_0(x) &= s_{-1}\varepsilon(x)\otimes s_{-1}\varepsilon(x) = x\otimes x,\\
\Delta_1(y_i) &= \tilde{s}_0\Delta_0d_1(y_i) = \tilde{s}_0(a_ix\otimes a_ix) - \tilde{s}_0(x\otimes x) \\
&= y_i\otimes a_ix + x\otimes y_i, \\
\Delta_1(z_i) &= \tilde{s}_0\Delta_0d_1(z_i) = \tilde{s}_0(b_ix\otimes b_ix) - \tilde{s}_0(x\otimes x) \\
&= z_i\otimes b_ix + x\otimes z_i. \\
\end{align*}
Now, to calculate the maps $\Delta_{02}\colon P_2 \to P_0\otimes P_2$
and $\Delta_{11}\colon P_2 \to P_1\otimes P_1$, we observe the
following: if $g,g'\in G$, then
\begin{align*}
\tilde{s}_1(gx\otimes g'y_i) &= \underbrace{s_0(gx)\otimes g'y_i}_{\in P_1\otimes P_1} + \underbrace{x\otimes s_1(g'y_i)}_{\in P_0\otimes P_2}, \\
\tilde{s}_1(gx\otimes g'z_i) &= \underbrace{s_0(gx)\otimes g'z_i}_{\in P_1\otimes P_1} + \underbrace{x\otimes s_1(g'z_i)}_{\in P_0\otimes P_2}, \\
\tilde{s}_1(gy_i \otimes g'x) &= s_1(gy_i)\otimes g'x \in P_2\otimes P_0,\\
\tilde{s}_1(gz_i \otimes g'x) &= s_1(gz_i)\otimes g'x \in P_2\otimes P_0.\\
\end{align*}
For the computation of $\Delta_{11}$, we are interested in the terms
that belong to $P_1\otimes P_1$, and for $\Delta_{02}$ we want the
terms belonging to $P_0\otimes P_2$. So our knowledge of $s_0$ is
sufficient for the calculation of $\Delta_{11}$. Using
Proposition~\ref{proposition:tomodadiagonal}, we get $\Delta_{11}$ as
in the statement of the theorem.

Finally, to compute $\Delta_{02}$, we need the map $s_1$ of our
contracting homotopy $s$. The homomorphism $s_1$ must satisfy
$s_1(d_2(w)) = w$, which is equivalent to
\[
s_1\left(\sum_{i=1}^n \frac{\partial p}{\partial a_i}y_i +
\frac{\partial p}{\partial b_i}z_i\right) = w.
\]
%\[
%\sum_{i=1}^n s_1(p_1\cdot\cdots\cdot p_{i-1}y_i) - s_1(p_1\cdot\cdots\cdot p_{i-1}a_ib_ia_i^{-1}y_i) + s_1(p_1\cdot\cdots\cdot p_{i-1}a_iz_i) - s_1(p_1\cdot\cdots\cdot p_iz_i) = w
%\]
But we have 
\begin{align*}
\Delta_{02}(w) &= \pi_{02}\tilde{s}_1\Delta_1d_2(w)\\
&= \pi_{02}\tilde{s}_1\left( \frac{\partial p}{\partial a_i}(x\otimes y_i) + \frac{\partial p}{\partial b_i}(x\otimes z_i) \right) \\
&= x\otimes s_1\left(\sum_{i=1}^n \frac{\partial p}{\partial a_i}y_i + \frac{\partial p}{\partial b_i}z_i\right) = x\otimes w.
\end{align*}
\end{proof}

\begin{corollary}
Let $M$ be a trivial $\zz G$-module. If $u,v \in \Hom_{\zz G}(P_1,M)$,
then the product $[u]\smile [v]\in H^2(G,M\otimes M)$ is represented
by the map $(u\smile v)\in \Hom_{\zz G}(P_2, M\otimes M)$ defined by
\[
(u\smile v)(w) = \sum_{i=1}^n \bigl(u(y_i)\otimes v(z_i) - u(z_i)\otimes v(y_i)\bigr).
\]
\end{corollary}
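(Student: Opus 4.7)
The strategy is to apply the formula $(u\smile v)(w) = (u\otimes v)(\Delta_{11}(w))$ from the discussion following~(\ref{eq:alphabeta}), using the explicit description of $\Delta_{11}$ given in Theorem~\ref{theorem:orientablediagonal}. Since $w$ generates $P_2$ freely as a $\zz G$-module, it suffices to evaluate the cocycle $(u\smile v)$ at $w$.

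The essential simplification comes from the triviality of $M$: for any $g\in \zz G$ and any generator $\xi$ of $P_1$, the $\zz G$-linearity of $u$ together with the trivial $G$-action gives $u(g\xi) = \varepsilon(g)\, u(\xi)$, and analogously for $v$, where $\varepsilon$ is the augmentation. Consequently, each simple tensor $g\xi\otimes g'\eta$ appearing in $\Delta_{11}(w)$ contributes $\varepsilon(g)\varepsilon(g')\, u(\xi)\otimes v(\eta)$ to $(u\smile v)(w)$, and a term can survive only if \emph{both} of its coefficients have non-zero augmentation.

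Scanning the formula for $\Delta_{11}(w)$, one notices that the overwhelming majority of left-hand coefficients contain a factor of the form $(1-h)$ with $h\in G$, whose augmentation vanishes. In particular, the entire first and fourth blocks of sums are annihilated, and so are all the inner sums $\sum_{j=1}^{i-1}$ inside the second and third blocks, together with the $(p_1\cdots p_{i-1})(1-a_ib_ia_i^{-1})y_i\otimes\cdots$ term in the second block. The survivors are exactly $-(p_1\cdots p_{i-1})a_iz_i \otimes (p_1\cdots p_{i-1})a_ib_ia_i^{-1}y_i$ for $i = 1,\ldots,n-1$, the terms $(p_1\cdots p_{i-1})y_i\otimes(p_1\cdots p_{i-1})a_iz_i$ for $i=1,\ldots,n$, and the isolated tail $-z_n\otimes b_ny_n$. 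Since each $p_j$, $a_i$, $b_i$ has augmentation $1$, applying $u\otimes v$ turns these into $-u(z_i)\otimes v(y_i)$, $u(y_i)\otimes v(z_i)$, and $-u(z_n)\otimes v(y_n)$ respectively; grouping the tail with the second block so that the negative sum runs from $1$ to $n$ yields the claimed expression. The main obstacle is purely bookkeeping --- correctly matching each surviving coefficient with its augmentation value and verifying that the tail term precisely extends the second sum to $i=n$ --- with no conceptual difficulty beyond the augmentation-kills-$(1-h)$ observation.
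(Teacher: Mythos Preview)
Your proof is correct and follows precisely the same approach as the paper: both observe that for a trivial module $M$ and $f\in\Hom_{\zz G}(P_1,M)$ one has $f(k\alpha)=\varepsilon(k)f(\alpha)$, and then reduce the $\Delta_{11}$ formula accordingly. The paper's proof is simply terser, leaving the explicit bookkeeping (which you have carried out carefully and accurately) to the reader.
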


\begin{proof}
We only need to notice that if $M$ is a trivial $\zz G$-module and
$\varepsilon\colon \zz G\to \zz$ is the augmentation homomorphism,
then for every $f\in \Hom_{\zz G}(P_1,M)$ we have $f(k\alpha) =
\varepsilon(k)f(\alpha)$, $\forall k\in \zz G,\: \forall \alpha\in
P_1$. This, together with the formula we have for $\Delta_{11}$, gives
us the desired result.
\end{proof}

Now we consider the case of the non-orientable surfaces: let $N_n$ be
the non-orientable surface with fundamental group given by the
presentation
\[
G = \pi_1(N_n) = \gen{a_1,\ldots,a_n \mid a_1^2a_2^2\cdot\cdots\cdot a_n^2},
\]
for $n\ge 2$. We also define $p = a_1^2a_2^2\cdot\cdots\cdot
a_n^2$. The computations for the case of a non-orientable surface are
similar to those we made in the orientable case, and so we simply
state the free resolution and the diagonal approximation we get.

\bigskip
\begin{proposition}[Free resolution for the non-orientable case]\label{proposition:nonorientableresolution}
A free resolution of $\zz$ over $\zz G$ is given by
\begin{equation}\label{eq:nonorientableresolution}
\xymatrix{
0 \ar[r] & P_2 \ar[r]^-{d_2} & P_1 \ar[r]^-{d_1} & P_0 \ar[r]^-\varepsilon & \zz \ar[r] & 0,
}
\end{equation}
where
\begin{align*}
P_0 &= \gen{x} \cong \zz G,\\
P_1 &= \gen{y_1,\ldots,y_n} \cong \zz G^n,\\
P_2 &= \gen{w} \cong \zz G,
\end{align*}
and the maps $\varepsilon$, $d_1$ and $d_2$ are defined by
\begin{align*}
\varepsilon(x) &= 1, \\
d_1(y_i) &= (a_i-1)x, \\
d_2(w) &= \ds\sum_{i=1}^n\frac{\partial p}{\partial a_i}y_i,
\end{align*}
where the partial derivatives are the Fox derivatives.
\end{proposition}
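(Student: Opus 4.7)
The plan is to follow exactly the same strategy used in the orientable case of Proposition~\ref{proposition:orientableresolution}. For $n \geq 2$, the non-orientable surface $N_n$ is aspherical, so its universal cover is homeomorphic to $\rr^2$ and is therefore a free and contractible $G$-CW complex. Consequently, the augmented cellular chain complex of the universal cover, viewed as a complex of free $\zz G$-modules, is automatically a free resolution of the trivial module $\zz$ over $\zz G$.

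To identify that complex with the one in~\eqref{eq:nonorientableresolution}, I would begin from the standard CW decomposition of $N_n$ induced by the given presentation: one $0$-cell, $n$ oriented $1$-cells labelled $a_1,\ldots,a_n$, and a single $2$-cell attached along the word $p = a_1^2 a_2^2 \cdots a_n^2$. Lifting to the universal cover, each $G$-orbit of cells contributes one free $\zz G$-generator, yielding $P_0 \cong \zz G$ on the generator $x$, $P_1 \cong \zz G^n$ on the generators $y_1,\ldots,y_n$, and $P_2 \cong \zz G$ on the generator $w$, exactly as stated.

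The formulas for the boundary maps are then checked one at a time. The augmentation $\varepsilon(x) = 1$ is immediate. The lift of the $1$-cell $y_i$ runs from a chosen basepoint to its $a_i$-translate, so its cellular boundary is $a_i x - x = (a_i - 1)x$, which is $d_1(y_i)$. For $d_2$, one invokes the classical identification of the cellular boundary of the $2$-cell with the Fox-derivative expansion of the relator---used in exactly the same way as in the orientable case and recorded in~\cite{Fadell83}---to obtain $d_2(w) = \sum_{i=1}^{n} \frac{\partial p}{\partial a_i}\, y_i$.

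Because every step parallels the orientable case, there is no genuine new difficulty; the only change is the particular relator. The one point worth emphasising is the hypothesis $n \geq 2$, which is precisely what guarantees asphericity of $N_n$: the excluded case $n = 1$ corresponds to $\rr P^2$, whose universal cover $S^2$ is not contractible, so the argument via the cellular chain complex of the universal cover would break down.
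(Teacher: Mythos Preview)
Your proposal is correct and matches the paper's approach exactly: the paper simply remarks that the non-orientable case is handled in the same way as the orientable one (Proposition~\ref{proposition:orientableresolution}), namely by observing that $N_n$ is a $K(G,1)$ for $n\ge 2$, so the augmented cellular chain complex of its universal cover $\rr^2$ is a free $\zz G$-resolution of $\zz$, with the identification of the differentials via Fox derivatives referred to~\cite{Fadell83}. Your additional explicit verification of $d_1$ and $d_2$ and your remark on why $n\ge 2$ is needed are welcome elaborations but not a different argument.
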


\begin{theorem}[Diagonal approximation for the non-orientable case]
Let $P$ be the free resolution of $\zz$ over $\zz G$ given by
Proposition~\ref{proposition:nonorientableresolution}. A diagonal
approximation $\Delta\colon P\to (P\otimes P)$ is partially given by
{\allowdisplaybreaks
\begin{align*}
\Delta_0\colon & P_0 \to (P\otimes P)_0 \\
&\Delta_0(x) = x\otimes x,\\
\Delta_1\colon & P_1 \to (P\otimes P)_1 \\
&\Delta_1(y_i) = y_i\otimes a_ix + x\otimes y_i, \\
\Delta_{02}\colon & P_2 \to (P_0\otimes P_2) \\
&\Delta_{02}(w) = x\otimes w, \\
\Delta_{11}\colon & P_2 \to (P_1\otimes P_1) \\
&\Delta_{11}(w) = \sum_{i=1}^n\biggl[\biggl(\sum_{j=1}^{i-1} (a_1^2\cdots a_{j-1}^2)(1+a_j)y_j\otimes (a_1^2\cdots a_{i-1}^2)y_i\biggr) + \\
&\phantom{\Delta_{11}(w) = \sum\biggl[} + \biggl(\sum_{j=1}^{i-1} (a_1^2\cdots a_{j-1}^2)(1+a_j)y_j\otimes (a_1^2\cdots a_{i-1}^2)a_iy_i\biggr) + \\
&\phantom{\Delta_{11}(w) = \sum\biggl[} + (a_1^2\cdots a_{i-1}^2)y_i \otimes (a_1^2\cdots a_{i-1}^2)a_iy_i\biggr].
\end{align*}
}
\end{theorem}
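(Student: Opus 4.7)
The plan is to adapt the proof of Theorem~\ref{theorem:orientablediagonal} almost verbatim to the non-orientable setting, since the structure of the resolution in Proposition~\ref{proposition:nonorientableresolution} is formally the same, only simpler: there are no generators $z_i$, and the single relator $p = a_1^2\cdots a_n^2$ has no inverse letters. First I would construct a contracting homotopy $s$ for $P$ by setting
\[
s_{-1}(1) = x, \qquad s_0(\tilde{p}x) = \sum_{j=1}^n \frac{\partial \tilde{p}}{\partial a_j}\,y_j
\]
on each reduced word $\tilde{p}\in G$ (with a fixed choice of reduced representative, as in the orientable proof), and let $s_1$ be any $\zz$-linear splitting satisfying $s_1(d_2(w)) = w$, whose existence is guaranteed by exactness.

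With $s$ in place, $\Delta_0$ and $\Delta_1$ follow immediately from Proposition~\ref{proposition:tomodadiagonal}. Using $d_1(y_i) = (a_i-1)x$ and $\Delta_0(x) = x\otimes x$, a short calculation gives $\Delta_0 d_1(y_i) = a_ix\otimes a_ix - x\otimes x$; applying $\tilde{s}_0$ together with the identities $s_0(a_ix) = y_i$ (from $\partial a_i/\partial a_j = \delta_{ij}$) and $s_0(x) = 0$ produces the claimed formula $\Delta_1(y_i) = y_i\otimes a_ix + x\otimes y_i$.

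For $\Delta_{02}$, I would reuse the elegant observation from the orientable proof: among the four types of elementary tensors, only $\tilde{s}_1(gx\otimes g'y_i)$ has a component in $P_0\otimes P_2$, namely $x\otimes s_1(g'y_i)$. Applying this to $\Delta_1 d_2(w) = \sum_i (\partial p/\partial a_i)(y_i\otimes a_ix + x\otimes y_i)$ and projecting, the $y_i\otimes a_ix$ terms are killed and what remains collapses to $x\otimes s_1(d_2(w)) = x\otimes w$; no explicit formula for $s_1$ is required.

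The main step is $\Delta_{11}$, where only $s_0$ is needed. The Fox derivative computation $\partial p/\partial a_i = (a_1^2\cdots a_{i-1}^2)(1+a_i)$ yields $d_2(w) = \sum_i(a_1^2\cdots a_{i-1}^2)(1+a_i)y_i$; applying $\Delta_1$ and then extracting the $P_1\otimes P_1$ part of $\tilde{s}_1$, only tensors of the shape $gx\otimes g'y_i$ contribute $s_0(gx)\otimes g'y_i$. The two relevant group elements are $a_1^2\cdots a_{i-1}^2$ and $(a_1^2\cdots a_{i-1}^2)a_i$, whose Fox derivatives give $s_0((a_1^2\cdots a_{i-1}^2)x) = \sum_{j<i}(a_1^2\cdots a_{j-1}^2)(1+a_j)y_j$ together with an extra term $(a_1^2\cdots a_{i-1}^2)y_i$ when the trailing $a_i$ is differentiated. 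Summing and reorganizing these contributions produces exactly the three terms displayed in the statement. The main obstacle is purely bookkeeping: one must be careful about the diagonal $\zz G$-action on $P\otimes P$ when distributing group elements across tensors, but, unlike the orientable case, no cancellations between distinct generators occur, so the final expression is considerably shorter and the combinatorics substantially lighter.
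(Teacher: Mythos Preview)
Your proposal is correct and follows exactly the approach the paper intends: the paper gives no separate proof for the non-orientable theorem, explicitly stating that ``the computations for the case of a non-orientable surface are similar to those we made in the orientable case,'' and your argument is precisely the adaptation of the orientable proof with the simplifications you note (no $z_i$'s, no inverse letters in $p$, hence a much cleaner Fox-derivative bookkeeping for $\Delta_{11}$). Your verification of the Fox derivatives $\partial p/\partial a_i = (a_1^2\cdots a_{i-1}^2)(1+a_i)$ and of $s_0$ on the words $a_1^2\cdots a_{i-1}^2$ and $a_1^2\cdots a_{i-1}^2 a_i$ is accurate, and the three displayed terms in $\Delta_{11}(w)$ fall out exactly as you describe.
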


\section{Comments about applications}

Now we make some comments about applications of the results of the
previous section.  The cohomolo\-gy ring of a surface group with
twisted coefficients has been used in many applications. As we've
stated in the Introductio, see~\cite{DacibergEdson97} for an example,
where the local group is $\qq$, the rationals.

\subsection{ The Cohomology ring of a surface with arbitrary coefficients}

 In  \cite{Lyndon50}  the cohomology groups of a group which admits a presentation  with one single relation
 was studied. This includes  the surface groups for  surfaces distinct  from $S^2$ and $\rr P^2$. More precisely  
  in Corollary 11.2   they provide a formula for   $H^n(G,K)$,  $n\geq 2$. They prove the following result:
% From above we canalso give a formula for $H^1$ and some information about the cupproduct. Also as curiosity, there is a sysem of coefficients such that 
% From \cite{Lyndon50} follows:
 
 \begin{corollary}(\cite{Lyndon50})
If G is defined by a single relation R, where $R=Q^q$ for no $q>1$,
and if $K$ is any left G-module, then
$H^2(G,K)=K/\bigl(\dfrac{\partial R}{\partial
  x_1},\ldots,\dfrac{\partial R}{\partial x_m}\bigr)K$ and $H^n(G,
K)=0$ for all $n > 2$.
 \end{corollary}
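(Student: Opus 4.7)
The plan is to build a free resolution of $\zz$ over $\zz G$ of length exactly $2$ and then apply $\Hom_{\zz G}(-, K)$ to extract $H^*(G,K)$. Starting from the presentation $G = \gen{x_1,\ldots,x_m \mid R}$, write $F$ for the free group on $x_1,\ldots,x_m$ and $N$ for the normal closure of $R$ in $F$. The Crowell exact sequence associated to $1 \to N \to F \to G \to 1$ gives a canonical exact sequence of left $\zz G$-modules
$$0 \to N^{\mathrm{ab}} \to \zz G^m \xrightarrow{d_1} \zz G \xrightarrow{\varepsilon} \zz \to 0,$$
where $\varepsilon$ is the augmentation, $d_1(e_i) = x_i - 1$, and $N^{\mathrm{ab}}$ is the relation module viewed as a left $\zz G$-module. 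Because $N$ is generated by conjugates of $R$, there is a canonical surjection $\zz G \twoheadrightarrow N^{\mathrm{ab}}$ sending $1$ to the class of $R$; composing with the inclusion $N^{\mathrm{ab}} \hookrightarrow \zz G^m$ produces a candidate map $d_2\colon \zz G \to \zz G^m$, which by Fox calculus takes the explicit form $d_2(1) = \sum_{i=1}^m (\partial R/\partial x_i)\, e_i$, in complete analogy with Proposition~\ref{proposition:orientableresolution}.

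The core step is to verify that the surjection $\zz G \twoheadrightarrow N^{\mathrm{ab}}$ is actually an isomorphism, which amounts to showing that $d_2$ is injective. This is precisely Lyndon's Identity Theorem: when $R$ is not a proper power in $F$, the relation module $N^{\mathrm{ab}}$ is a free $\zz G$-module of rank one. I expect this to be the main obstacle; in~\cite{Lyndon50} it is the technical heart of the paper, and it is proved by induction on the length of $R$ using Magnus rewriting, which exhibits the one-relator group as an HNN extension over a one-relator group with a shorter defining relator. The hypothesis $R \neq Q^q$ for $q > 1$ enters essentially at this step: for a proper power, the element $1 + Q + \cdots + Q^{q-1} \in \zz G$ lies in the kernel of the surjection, so $d_2$ fails to be injective and $G$ has infinite cohomological dimension.

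Granting the identity theorem, we obtain the finite free resolution
$$0 \to \zz G \xrightarrow{d_2} \zz G^m \xrightarrow{d_1} \zz G \xrightarrow{\varepsilon} \zz \to 0,$$
and applying $\Hom_{\zz G}(-,K)$ together with the identification $\Hom_{\zz G}(\zz G^r, K) \cong K^r$ yields the cochain complex $K \xrightarrow{\delta^0} K^m \xrightarrow{\delta^1} K$, with all higher terms zero and with $\delta^1(k_1,\ldots,k_m) = \sum_{i=1}^m (\partial R/\partial x_i)\cdot k_i$. Therefore $H^n(G,K) = 0$ for every $n > 2$, and
$$H^2(G,K) \;=\; \coker(\delta^1) \;=\; K\big/\bigl(\tfrac{\partial R}{\partial x_1},\ldots,\tfrac{\partial R}{\partial x_m}\bigr)K,$$
which is exactly the stated formula. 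Once Lyndon's Identity Theorem is in hand, everything after it is purely formal bookkeeping.
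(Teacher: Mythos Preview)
The paper does not prove this corollary at all; it simply quotes it from \cite{Lyndon50} (Corollary~11.2 there) as background before going on to describe $H^1$ via the resolution of Section~2. So there is no proof in the paper to compare against, and your outline is essentially Lyndon's original argument: the Crowell sequence plus the Identity Theorem to show the relation module is free cyclic, yielding a length-two free resolution. That strategy is correct and standard.

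One small slip in your parenthetical about proper powers: if $R=Q^q$, the element of $\zz G$ that visibly annihilates $[R]\in N^{\mathrm{ab}}$ is $Q-1$, since $Q$ commutes with $R$ and hence $Q\cdot[R]=[QRQ^{-1}]=[R]$. The norm element $1+Q+\cdots+Q^{q-1}$ instead sends $[R]$ to $q[R]\neq 0$. This does not affect your main line, but the illustration of where the hypothesis enters should be corrected. Your remark that $G$ then has infinite cohomological dimension is right: the image of $Q$ has order $q$ in $G$, so $G$ contains nontrivial torsion.
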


%$ \frac{\partial p}{\partial a_i}$

Using the resolution of the previous section we can write a formula
for $H^1$. For, let $G$ be a surface group with $G \ne \zz_2$, $G \ne
\{1\}$ and let $K$ be a left $G$-module. It is well known that $H^0(G,
K)=K^G$, i.e. the set of elements of $K$ which are fixed by the ring
$\zz G$. If $G$ is an orientable surface group, the group cohomology
with coefficients $K$, from section 1, is given by the homology of the
complex

\begin{equation}\label{eq:orientableresolutionK}
\xymatrix{
0 \ar[r] & K \ar[r]^-{d_1^*} & K^{2n} \ar[r]^-{d_2^*} & K  \ar[r] & 0,}
\end{equation}

%\begin{equation}\label{eq:nonorientableresolution}
%\xymatrix{
%0 \ar[r] & P_2 \ar[r]^-{d_2} & P_1 \ar[r]^-{d_1} & P_0 \ar[r]^-\varepsilon & \zz \ar[l] & 0,
%}
%\end{equation}

\noindent where $d_1^*(k)=(a_1(k)-k, a_2(k)-k, ...., a_n(k)-k,
b_1(k)-k,..., b_n(k)-k)$ and $d_2(k_1,..., k_{2n})= \sum_{i=1}^{i=n}
\frac{\partial p}{\partial a_i}(k_i)+\frac{\partial p}{\partial
  b_i}(k_{i+n}) $. We then get the following proposition:

\begin{proposition} For $G$ an orientable surface group we have  $H^1(G, K)=Ker (d_2^*)/d_1^*(K/K^{G})$ 
where   $d_2^*$ is given as above.
% module the embedding of $K/K^{G}$ in  $K+...+K$ given by 
% From the   the group $K/K^{G}$ embedds in $K+...+K$
%with the embbeding induced by $d_1(k)=(a_i(k)-k,....b_j(k)-k)$. Let $d_2:K+...+K \to K$ given by %$d_2(k_1,..., k_{2n}= \Sigma \partial p/\partial a_i(k_i)+\partial p/\partial b_i(k_j) $  and \\
% $H^1(G, K)= Ker(d_2)/(d_1^*(K/K^{G})$. 
\end{proposition}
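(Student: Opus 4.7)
The plan is to compute $H^1(G,K)$ directly from the definition $H^1(G,K)=\Ext^1_{\zz G}(\zz,K)$, using the free resolution of Proposition~\ref{proposition:orientableresolution} as the resolving complex. Applying $\Hom_{\zz G}(\underline{\phantom{M}},K)$ to that resolution and using the canonical identification $\Hom_{\zz G}(\zz G,K)\cong K$ (sending $\varphi\mapsto \varphi(x)$, resp.\ $\varphi(y_i),\varphi(z_i),\varphi(w)$), I would first verify that the resulting cochain complex is exactly the complex (\ref{eq:orientableresolutionK}) written in the statement. The key computation is that for a $\zz G$-linear $f\colon P_1\to K$, the value $(d_2^*f)(w)=f(d_2 w)=\sum_i\bigl(\tfrac{\partial p}{\partial a_i}f(y_i)+\tfrac{\partial p}{\partial b_i}f(z_i)\bigr)$ realizes the formula for $d_2^*$, and similarly for $d_1^*$ from $d_1(y_i)=(a_i-1)x$ and $d_1(z_i)=(b_i-1)x$.

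Once the complex is in place, $H^1(G,K)=\Ker(d_2^*)/\Im(d_1^*)$ by definition. The remaining point is to identify $\Im(d_1^*)$ with $d_1^*(K/K^G)$. For this I would show $\Ker(d_1^*)=K^G$: if $d_1^*(k)=0$ then $a_i(k)=k$ and $b_i(k)=k$ for every $i$, and since the $a_i,b_i$ generate $G$ this forces $k\in K^G$; conversely $K^G\subseteq\Ker(d_1^*)$ is immediate. Hence $d_1^*$ factors through an injection $\bar d_1^*\colon K/K^G\hookrightarrow K^{2n}$ whose image equals $\Im(d_1^*)$, and writing this image as $d_1^*(K/K^G)$ gives the claimed formula $H^1(G,K)=\Ker(d_2^*)/d_1^*(K/K^G)$.

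There is no real obstacle here beyond bookkeeping; the main subtlety to be careful about is the identification between $\Hom_{\zz G}(\zz G^{2n},K)$ and $K^{2n}$, so that the transposes of the Fox-derivative boundaries in the resolution really produce the action-by-$\tfrac{\partial p}{\partial a_i}$ maps on coefficients appearing in $d_2^*$. After that step the argument is purely a cohomology-of-a-length-three-complex calculation.
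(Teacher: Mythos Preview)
Your proposal is correct and follows exactly the approach the paper intends: the paper simply sets up the cochain complex~(\ref{eq:orientableresolutionK}) by applying $\Hom_{\zz G}(-,K)$ to the resolution of Proposition~\ref{proposition:orientableresolution} and then states the proposition without further argument, treating the identification $\Ker(d_1^*)=K^G$ (hence $\Im(d_1^*)=d_1^*(K/K^G)$) as evident. You have spelled out precisely these details, so there is nothing to add.
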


\begin{remark}
 The calculation of the Fox derivative is straightforward. See, for
 example, \cite{Fadell83}.
\end{remark}

\begin{remark}
If $G=\zz\oplus\zz$ and $K=\zz\oplus\zz$, let us consider the $\zz
G$-module structure given by $\theta(1,0)=\theta(0,1)\in GL_2(\zz)$
equal to the matrix
$$
\varphi =\begin{bmatrix}
                      2 & 1 \\
                      1 & 1
         \end{bmatrix}.
$$ Since the matrix has no eigenvalue 1, follows that $H^0(G,K)=0$. A
direct calculation shows that $d_2^*$ is surjective. So it follows
that $H^2(G, K)=0$. Also by a straightforward long calculation, we
have $H^1(G, K)=0$.  This phenomena, i.e., the existence of a system
of coefficients such that the cohomology is trivial, can not happen if
the surface doesn't have Euler characteristic zero.

A similar result holds for the nonorientable surfaces and we leave the
details to the reader.
\end{remark}

% Let us conclude this subsectionwith the remark that incse the Also as curiosity, there is a sysem of %coefficients such that 
% From \cite{Lyndon50} follows:

\subsection{ Classification of Torus bundle over a surface}

Let us consider the $n$-dimensional torus $T^n$, which is a Lie-group.
There is a subfamily of the family of all torus bundles over a given
surface whose elements are the principal torus bundles. It is well
known that such bundles are classified by $[S, BT^n]$, (where $BT^n$
is the classifying space of the torus), which in turn is $H^2(S,
\zz\oplus\cdots\oplus\zz)=\zz\oplus\cdots\oplus\zz$. In case we
consider non-principal bundles, let us focus on those
% bundles (not principal bundles) 
with a prescribed action $\theta: \pi_1(S) \to
\Aut(\zz\oplus\cdots\oplus\zz)= GL_n(\zz)$. Such bundles are
classified and they are in one-to-one correspondence with
$H^2(S,\zz\oplus\cdots\oplus\zz)_{\theta}$, the second cohomology
group with local coefficients given by $\theta$. See Hilman
\cite{Hillman}, section 5, Lemma 5.1 and Robinson
\cite{Robinson72}. This group is well known and it is isomorphic to
the quotient of $\zz\oplus\cdots\oplus\zz$ by the action, i.e., by the
subgroups generated by the elements $\theta(g)(x)-x$ for all $g\in
\pi_1(S)$ and $x\in (\zz\oplus\cdots\oplus\zz)$.  It is easy to find many
actions $\theta $ such that the group $H^2(S,
\zz\oplus\cdots\oplus\zz)_{\theta}$ is trivial, i.e. there is only one
bundle with that prescribed action.

\subsection{ Cohomology of surface  bundles over a surface}

The total space $S$ of a surface bundle $S_ 1 \to S \to S_2$ over a
surface $S_2$ is a $4$-manifold. The calculation of the cohomology of
those spaces can be approached using spectral sequences. The $E_2$
term of the spectral sequence is given by $H^p(S_2, H^q(S_1))$ where
the cohomology is with local coefficients. The action of $\pi_1(S_2)$
on $H^q(S_1)$ is completely determined by the action on $H^1(S_1)$ and
this is a data of the bundle. So one can make use of the section one
to determine the ring structure of $E_2$. We point out that this is a
relevant step to find the cohomology of the space but in general not a
sufficient one.

\subsection{ The Cohomology ring of a surface with arbitrary coefficients $\zz$}

Here we  compute  the complete ring strucuture     of the cohomology of a surface for   any local 
system having as group the integers $\zz$. 
First we consider the case where $G$ is the fundamental group of an
orientable surface. If $\tilde{\zz}$ is a non-trivial $G$-module, then
it is proved in~\cite{DacibergJohn2010} that the only action
$\theta\colon G \to \Aut(\zz) = \{1,-1\}$ we need to consider is the
one given by $\theta(b_n)(1)=-1$, $\theta(a_i)(1)=1$ for all $1\le
1\le n$ and $\theta(b_i)(1)=1$ for all $1\le i\le n-1$. We denote by
$\tilde{\zz}$ the $G$-module $\zz$ given by the action $\theta$ and
denote by $\zz$ the trivial $G$-module, and we also observe that
$\zz\otimes \zz \cong \zz$, $\zz\otimes \tilde{\zz} \cong \tilde{\zz}$
and $\tilde{\zz}\otimes \tilde{\zz} \cong \zz$.

\begin{theorem} Let $G = \gen{a_1,b_1,a_2,b_2,\ldots,a_n,b_n \mid
  [a_1,b_1][a_2,b_2]\cdot\cdots\cdot [a_n,b_n]}$, for $n\ge 1$. The
  groups $H^*(G,\zz)$ and $H^*(G,\tilde{\zz})$ are given by
\begin{align*}
H^0(G,\zz) &\cong \zz, & H^0(G,\tilde{\zz}) &= 0, \\
H^1(G,\zz) &\cong \zz^{2n}, & H^1(G,\tilde{\zz}) &\cong \zz^{2n-2}\oplus \zz_2, \\
H^2(G,\zz) &\cong \zz, & H^2(G,\tilde{\zz}) &\cong \zz_2.\\
\end{align*}
More precisely, in terms of the free resolution obtained in
Proposition~\ref{proposition:orientableresolution}, we have the
following sets of generators for the groups $H^*(G,\zz)$ and
$H^*(G,\tilde{\zz})$:
\begin{align*}
\text{generator of $H^0(G,\zz)$} &: \{[x^*]\}, \\
\text{generators of $H^1(G,\zz)$} &: \{[y_1^*], \ldots, [y_n^*], [z_1^*], \ldots, [z_n^*]\}, \\
\text{generator of $H^2(G,\zz)$} &: \{[w^*]\}, \\
\phantom{blank line} \\
\text{generators of $H^1(G,\tilde{\zz})$} &: \{[y_1^*]_\theta, \ldots, [y_{n-1}^*]_\theta, [z_1^*]_\theta, \ldots, [z_n^*]_\theta\}, \quad\text{(remark: $[z_n^*]_\theta$ has order $2$)} \\
\text{generator of $H^2(G,\tilde{\zz})$} &: \{[w^*]_\theta\}, \\
\end{align*}
where $[\phantom{M}]$ and $[\phantom{M}]_\theta$ represent cohomology
classes in $H^*(G,\zz)$ and $H^*(G,\tilde{\zz})$, respectively. Finally, we have
\[
\begin{array}{l}
{[y_i^*]^2} = 0 \quad\forall 1\le i\le n,\\
{[z_i^*]^2} = 0 \quad\forall 1\le i\le n,\\
{[y_i^*]\smile[y_j^*]} = 0, \\
{[z_i^*]\smile[z_j^*]} = 0, \\
{[y_i^*]\smile[z_j^*]} = \begin{cases}
[w^*], & \text{if $i=j\in\{1,\ldots,n\}$},\\
0, & \text{if $i\ne j$},
\end{cases}\\
{[y_i^*]_\theta^2} = 0 \quad\text{for $1\le i\le n-1$}, \\
{[z_i^*]_\theta^2} = 0 \quad\text{for $1\le i\le n$}, \\
{[y_i^*]_\theta\smile[z_i^*]_\theta} = [w^*] \quad\text{for $1\le i\le n-1$}, \\
{[y_i^*]_\theta\smile[z_j^*]_\theta} = 0 \quad\text{ if $i\ne j$}, \\
{[y_i^*]_\theta\smile[y_j^*]_\theta} = 0 \quad\text{ if $i\ne j$}, \\
{[z_i^*]_\theta\smile[z_j^*]_\theta} = 0 \quad\text{ if $i\ne j$}, \\
{[y_i^*]\smile[y_j^*]_\theta} = 0 \quad\forall (i,j)\in \{1,\ldots,n\}\times \{1,\ldots,n-1\}, \\
{[z_i^*]\smile[z_j^*]_\theta} = 0 \quad\forall i,j\in \{1,\ldots,n\}, \\
{[y_i^*]\smile[z_j^*]_\theta} = \begin{cases}
[w^*]_\theta, & \text{if $i=j\in\{1,\ldots,n-1\}$},\\
0, & \text{otherwise}.
\end{cases}\\
\end{array}
\]
\end{theorem}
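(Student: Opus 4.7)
The plan is to proceed in three stages: compute the cohomology groups using the resolution of Proposition~\ref{proposition:orientableresolution}; evaluate the cup products with fully trivial coefficients via the Corollary to Theorem~\ref{theorem:orientablediagonal}; and handle the remaining cup products by pairing the explicit formula for $\Delta_{11}$ against dual basis elements.

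Applying $\Hom_{\zz G}(-, M)$ to the resolution yields the cochain complex $0 \to M \to M^{2n} \to M \to 0$. For trivial $M = \zz$, both differentials vanish, because every entry of $d_1$ has the form $a_i - 1$ or $b_i - 1$ and every Fox derivative of a commutator has zero $\varepsilon$-augmentation; hence $H^0 = H^2 = \zz$ and $H^1 = \zz^{2n}$ with the listed generators. For $M = \tilde{\zz}$, only $\theta(b_n) = -1$ contributes a non-trivial sign: one checks that $d_1^\ast(x^\ast) = -2 z_n^\ast$ and that $d_2^\ast$ has a single non-zero component $d_2^\ast(y_n^\ast) = 2 w^\ast$, because $\theta(a_n b_n a_n^{-1}) = \theta(b_n) = -1$ forces the $\theta$-augmentation of $\partial p / \partial a_n$ to be $2$ while every other Fox-derivative entry has $\theta$-augmentation zero. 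A kernel/image count then yields $H^0(G,\tilde{\zz}) = 0$, $H^1(G,\tilde{\zz}) \cong \zz^{2n-2} \oplus \zz_2$ and $H^2(G,\tilde{\zz}) \cong \zz_2$ with the stated cocycle representatives.

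For the cup products with both coefficients trivial, the Corollary following Theorem~\ref{theorem:orientablediagonal} specializes $(u \smile v)(w)$ to $\sum_{i=1}^n (u(y_i) \otimes v(z_i) - u(z_i) \otimes v(y_i))$, and feeding in the dual basis $\{y_i^\ast, z_j^\ast\}$ produces $[y_i^\ast]^2 = [z_i^\ast]^2 = 0$, $[y_i^\ast] \smile [z_j^\ast] = \delta_{ij}[w^\ast]$ and all the remaining vanishings at once. For the products involving $\tilde{\zz}$-coefficients this simplification is no longer available, so we compute $(u \otimes v) \circ \Delta_{11}$ directly, using the $G$-equivariance of $u$ and $v$ to evaluate each summand of $\Delta_{11}(w)$ via the appropriate trivial or twisted action on $\zz$. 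The key observation is that, apart from the diagonal summands $(p_1\cdots p_{i-1}) y_i \otimes (p_1\cdots p_{i-1}) a_i z_i$, every term of $\Delta_{11}(w)$ carries a coefficient of the form $1 - a_j b_j a_j^{-1}$ or $1 - b_j a_j^{-1} b_j^{-1}$ with $j \le n-1$, and these coefficients augment to zero under both $\varepsilon$ and $\theta$ (since $\theta(b_j) = 1$ for $j < n$, and $\theta$ kills all commutators); consequently they contribute nothing to any cup product of cohomology classes. Pairing each surviving diagonal term with the corresponding dual basis elements then reproduces the table of products stated in the theorem.

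The main obstacle is the combinatorial bookkeeping of the third stage: $\Delta_{11}(w)$ is a long alternating sum, and each cup product requires a careful enumeration of the candidate $\alpha \otimes \beta$ summands, keeping track of their signs and group-ring prefixes and singling out the ones that survive the augmentation argument. Once this inventory is organized by the index pattern $(i,j)$ and by the position of the critical index $n$, the individual checks reduce to routine augmentation computations in $\zz$ or $\tilde{\zz}$.
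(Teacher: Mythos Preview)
Your approach is exactly what the paper intends: its entire proof is the single sentence ``Follows by routine calculation using the diagonal approximation,'' and you are supplying the routine calculation. The cohomology-group computations and the use of the Corollary for the fully trivial products are correct.

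There is one inaccuracy in your ``key observation'' for the twisted products. It is not true that every non-diagonal summand of $\Delta_{11}(w)$ carries a left coefficient of the form $1-a_jb_ja_j^{-1}$ or $1-b_ja_j^{-1}b_j^{-1}$. Two further families survive that reduction: the terms
\[
-(p_1\cdots p_{i-1})a_i\,z_i\otimes(p_1\cdots p_{i-1})a_ib_ia_i^{-1}\,y_i,\qquad 1\le i\le n-1,
\]
coming from the second block of the formula, and the isolated final term $-z_n\otimes b_ny_n$. Neither has a $(1-\cdots)$ prefactor, so your augmentation argument does not kill them.

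Fortunately this slip does not change any of the products actually listed in the theorem: both exceptional families are of type $z_i\otimes y_i$, whereas every product in the statement is of the form $[y^*]\smile[y^*]$, $[z^*]\smile[z^*]$, or $[y^*]\smile[z^*]$ (with or without $\theta$), and these pairings see only $y\otimes y$, $z\otimes z$, and $y\otimes z$ summands respectively. The $z\otimes y$ contributions are needed only for the products $[z_i^*]\smile[y_j^*]$, which are recovered from the listed ones by graded commutativity. So your conclusions stand, but the justification should acknowledge these extra surviving terms rather than claim they are absent.
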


\begin{proof}
Follows by routine calculation using the diagonal approximation.
\end{proof}

The non-orientable case is similar. Let $G = \gen{a_1,\ldots,a_n \mid
  a_1^2a_2^2\cdot\cdots\cdot a_n^2}$ for $n\ge 2$.
In~\cite{DacibergJohn2010} it is proved that, if we want to consider
all the possible structures of $\zz$ as a $G$-module, then we only
need to look at three different actions $\theta\colon G\to \Aut(\zz)$:
the first one is the trivial action $\theta_0\colon G\to
\Aut(\zz)$. The second one is $\theta_1 \colon G\to \Aut(\zz)$,
defined by $\theta_1(a_1)(1) = -1$ and $\theta_1(a_i)(1) = 1$ for
$2\le i\le n$. The third is $\theta_2\colon G\to \Aut(\zz)$, given by
$\theta_2(a_1)(1) = \theta_2(a_2)(1) = -1$ and $\theta_2(a_i)(1) = 1$
for $3\le i\le n$. We write $\zz_{\theta_i}$ for the $G$-module $\zz$
determined by the action $\theta_i$, for $0\le i\le 2$. Observe that,
for $0\le i\le 2$, $\zz_{\theta_0}\otimes \zz_{\theta_i}
\cong\zz_{\theta_i}$, $\zz_{\theta_i}\otimes \zz_{\theta_i} \cong
\zz_{\theta_0}$, and $\zz_{\theta_1}\otimes \zz_{\theta_2}
\cong\zz_{\theta_1}$.

\begin{theorem}
$G = \gen{a_1,\ldots,a_n \mid a_1^2a_2^2\cdot\cdots\cdot a_n^2}$,
  where $n\ge 2$. The cohomology groups $H^*(G,\zz_{\theta_i})$ are
  given by
\begin{align*}
H^0(G,\zz_{\theta_0}) &\cong \zz, & H^0(G,\zz_{\theta_1}) &=0, & H^0(G,\zz_{\theta_2}) &=0, \\
H^1(G,\zz_{\theta_0}) &\cong \zz^{n-1}, & H^1(G,\zz_{\theta_1}) &\cong \zz^{n-2}\oplus\zz_2, & H^1(G,\zz_{\theta_2}) &\cong \zz^{n-2}\oplus \zz_2, \\
H^2(G,\zz_{\theta_0}) &\cong \zz_2, & H^2(G,\zz_{\theta_1}) &\cong \zz_2, & H^2(G,\zz_{\theta_2}) &\cong \zz_2.
\end{align*}
More precisely, in terms of the free resolution obtained in
Proposition~\ref{proposition:nonorientableresolution}, we can
determine explicit generators for the groups $H^*(G,\zz_{\theta_i})$
as follows:
\begin{align*}
H^0(G,\zz_{\theta_0}) &= \gen{[x^*]_{\theta_0}}, \\
H^1(G,\zz_{\theta_0}) &= \bigoplus_{k=1}^{n-1}\gen{[y_k^*-y_{k+1}^*]_{\theta_0}}, \\
H^2(G,\zz_{\theta_0}) &= \gen{[w^*]_{\theta_0}}, \\
\phantom{0} \\
H^0(G,\zz_{\theta_1}) &= 0, \\
H^1(G,\zz_{\theta_1}) &= \gen{[y_1^*]_{\theta_1}}\oplus\left(\bigoplus_{k=2}^{n-1}\gen{[y_k^*-y_{k+1}^*]_{\theta_1}}\right), \\
H^2(G,\zz_{\theta_1}) &= \gen{[w^*]_{\theta_1}}, \\
\phantom{0} \\
H^0(G,\zz_{\theta_2}) &= 0, \\
H^1(G,\zz_{\theta_2}) &= \gen{[y_1^*]_{\theta_2}}\oplus\gen{[y_1^*+y_2^*]_{\theta_2}}\oplus\left(\bigoplus_{k=3}^{n-1}\gen{[y_k^*-y_{k+1}^*]_{\theta_2}}\right), \\
H^2(G,\zz_{\theta_2}) &= \gen{[w^*]_{\theta_2}},
\end{align*}
where $[\phantom{M}]_{\theta_i}$ represents the cohomology class in
$H^*(G,\zz_{\theta_i})$ and the elements $[y_1^*]_{\theta_1}$,
$[y_1^*+y_2^*]_{\theta_2}$ and $[w^*]_{\theta_i}$ have order
$2$. Finally, the products
\[
H^1(G,\zz_{\theta_i})\otimes H^1(G,\zz_{\theta_j}) \stackrel{\smile}{\to} H^2(G,\zz_{\theta_i}\otimes\zz_{\theta_j}) 
\]
are given by
{\allowdisplaybreaks
\begin{align*}
& [y_k^*-y_{k+1}^*]_{\theta_0}^2 = 0, \\
&[y_k^*-y_{k+1}^*]_{\theta_0} \smile [y_\ell^*-y_{\ell+1}^*]_{\theta_0} =
\begin{cases}
[w^*]_{\theta_0}, & \text{if $\ell=k\pm 1$}, \\
0, & \text{otherwise},
\end{cases} \\
&[y_k^*-y_{k+1}^*]_{\theta_0} \smile [y_1^*]_{\theta_1} =
\begin{cases}
[w^*]_{\theta_1}, & \text{if $k = 1$}, \\
0, & \text{if $k>1$},
\end{cases} \\
&[y_k^*-y_{k+1}^*]_{\theta_0} \smile [y_\ell^*-y_{\ell+1}^*]_{\theta_1} =
\begin{cases}
[w^*]_{\theta_1}, & \text{if $\ell=k\pm 1$}, \\
0, & \text{otherwise},
\end{cases} \\
&[y_k^*-y_{k+1}^*]_{\theta_0} \smile [y_\ell^*-y_{\ell+1}^*]_{\theta_2} =
\begin{cases}
[w^*]_{\theta_2}, & \text{if $\ell=k\pm 1$}, \\
0, & \text{otherwise},
\end{cases} \\
&[y_k^*-y_{k+1}^*]_{\theta_0} \smile [y_1^*]_{\theta_2} =
\begin{cases}
[w^*]_{\theta_2}, & \text{if $k = 1$}, \\
0, & \text{otherwise},
\end{cases} \\
&[y_k^*-y_{k+1}^*]_{\theta_0} \smile [y_1^*+y_2^*]_{\theta_2} =
\begin{cases}
[w^*]_{\theta_2}, & \text{if $k = 2$}, \\
0, & \text{otherwise},
\end{cases} \\
&[y_1^*]_{\theta_1}^2=[w^*]_{\theta_0}, \\
&[y_k^*-y_{k+1}^*]_{\theta_1}^2 = 0, \\
&[y_1^*]_{\theta_1}\smile[y_k^*-y_{k+1}^*]_{\theta_1} = 0, \\
&[y_1^*]_{\theta_2}^2 = [w^*]_{\theta_0}, \\
&[y_1^*+y_2^*]_{\theta_2}^2 = 0, \\
&[y_1^*]_{\theta_2}\smile [y_1^*+y_2^*]_{\theta_2} = [w^*]_{\theta_0}, \\
&[y_1^*]_{\theta_2}\smile [y_k^*-y_{k+1}^*]_{\theta_2} = 0, \\
&[y_1^*+y_2^*]_{\theta_2}\smile [y_k^*-y_{k+1}^*]_{\theta_2} = 0, \\
&[y_k^*-y_{k+1}^*]_{\theta_0} \smile [y_\ell^*-y_{\ell+1}^*]_{\theta_1} =
\begin{cases}
[w^*]_{\theta_1}, & \text{if $\ell=k\pm 1$}, \\
0, & \text{otherwise},
\end{cases} \\
&[y_1^*]_{\theta_1}\smile [y_1^*]_{\theta_2} = [w^*]_{\theta_1}, \\
&[y_1^*]_{\theta_1}\smile [y_1^*+y_2^*]_{\theta_2} = 0, \\
&[y_1^*]_{\theta_1}\smile [y_\ell^*-y_{\ell+1}^*]_{\theta_2} = 0, \\
&[y_k^*-y_{k+1}^*]_{\theta_1}\smile [y_1^*]_{\theta_2} = 0, \\
&[y_k^*-y_{k+1}^*]_{\theta_1}\smile [y_1^*+y_2^*]_{\theta_2}
= \begin{cases}
[w^*]_{\theta_1}, & \text{if $k=2$},\\
0, & \text{otherwise},
\end{cases} \\
&[y_k^*-y_{k+1}^*]_{\theta_1}\smile [y_\ell^*+y_{\ell+1}^*]_{\theta_2}
= \begin{cases}
[w^*]_{\theta_1}, & \text{if $\ell= k \pm 1$},\\
0, & \text{otherwise}.
\end{cases}
\end{align*}
}
\end{theorem}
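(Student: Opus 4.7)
The plan is to run the cochain-level machinery of Proposition~\ref{proposition:nonorientableresolution} with coefficients in each of $\zz_{\theta_0}$, $\zz_{\theta_1}$, $\zz_{\theta_2}$, and then apply the diagonal $\Delta_{11}$ just stated to read off the cup products.

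First I would apply $\Hom_{\zz G}(-,\zz_{\theta_i})$ to $P$, obtaining the three-term cochain complex $0\to\zz\xrightarrow{d_1^*}\zz^n\xrightarrow{d_2^*}\zz\to 0$ whose boundaries, using the Fox derivative $\partial p/\partial a_j=(a_1^2\cdots a_{j-1}^2)(1+a_j)$ together with $\theta_i(a_k^2)=1$, reduce to
\[
d_1^*(x^*) = \sum_{j=1}^n (\theta_i(a_j)-1)\, y_j^*, \qquad d_2^*(y_j^*) = (1+\theta_i(a_j))\, w^*.
\]
Every entry is thus $0$ or $\pm 2$. Substituting $\theta_0$, $\theta_1$, $\theta_2$ in turn makes both maps completely explicit, and standard linear algebra on the kernels and cokernels --- with the basis $\{y_k^*-y_{k+1}^*\}_k$ for the ``coordinate-sum'' kernel of $d_2^*$, supplemented by the exceptional cocycles $[y_1^*]_{\theta_1}$ for $\theta_1$ and $[y_1^*]_{\theta_2},[y_1^*+y_2^*]_{\theta_2}$ for $\theta_2$ --- produces the six cohomology groups with the stated orders and generators.

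For the cup products I would use $([u]\smile[v])(w) = (u\otimes v)(\Delta_{11}(w))$, evaluating the pairing via $(u\otimes v)(gy_j\otimes g'y_i)=\theta_\alpha(g)\theta_\beta(g')u(y_j)v(y_i)$ under the identification $\zz_{\theta_\alpha}\otimes\zz_{\theta_\beta}\cong\zz_{\theta_\alpha\theta_\beta}$. Splitting $\Delta_{11}(w)$ into the two off-diagonal sums over $j<i$ (each carrying a factor $(1+a_j)$) and the diagonal term at $i=j$, the squared prefactors $a_1^2\cdots a_{i-1}^2$ trivialize under any $\theta_i$ and the evaluation yields
\[
([u]\smile[v])(w) = \sum_{j<i}(1+\theta_\alpha(a_j))(1+\theta_\beta(a_i))\, u(y_j)v(y_i) + \sum_{i=1}^n \theta_\beta(a_i)\, u(y_i)v(y_i).
\]
Because $1+\theta(a_k)\in\{0,2\}$ the double sum always lies in $4\zz$, and because $\theta_\beta(a_i)=\pm 1$ the diagonal sum has the parity of $\sum_i u(y_i)v(y_i)$; reducing modulo the order-$2$ generator of $H^2$ collapses every cup product to the one-line formula $([u]\smile[v])(w)\equiv\sum_i u(y_i)v(y_i)\pmod 2$.

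The main obstacle is organizational: there are nine combinations $(\theta_\alpha,\theta_\beta)$ and several flavors of $H^1$-generators, giving over twenty table entries to check. The collapse to $\sum_i u(y_i)v(y_i)\pmod 2$ keeps each verification to essentially one line, but one still has to track carefully which cocycles represent the generators of each $H^1(G,\zz_{\theta_i})$, identify the isomorphism class of the target module $\zz_{\theta_\alpha}\otimes\zz_{\theta_\beta}$, and invoke the order-$2$ relation $-[w^*]=[w^*]$ in $H^2\cong\zz_2$ when comparing signs. Once this bookkeeping is set up uniformly, every entry of the table follows by routine calculation.
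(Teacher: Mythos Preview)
Your proposal is correct and is precisely the approach the paper takes: the paper's entire proof reads ``Follows by routine calculation using the diagonal approximation,'' and you have simply supplied the details of that routine. Your collapse of the pairing to $\sum_i u(y_i)v(y_i)\pmod 2$ is a clean organizing device that the paper does not make explicit but which falls directly out of the same computation.
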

\begin{proof}
Follows by routine calculation using the diagonal approximation.
\end{proof}

\begin{remark}
The cohomology ring structure with arbitrary coefficients of the
groups of the form $G\rtimes \zz_2$, where $G$ is a surface group,
seems an interesting problem.  More precisely, those groups are
natural groups to act freely and properly on even dimensional homotopy
spheres. Hence certain cohomological properties of those groups are
expected to show up. We hope to pursue this idea somewhere.
\end{remark}

% references
\bibliographystyle{ws-ijac}
\bibliography{surfacesdiagonal-2014-04-02}
\nocite{*}

\end{document}